\documentclass[11pt,reqno,oneside]{amsproc}
\title[Navier-Stokes with rotationally invariant force curl]{Regularity of asymptotically axisymmetric Navier-Stokes flows with rotationally invariant curl of the force}

\author[B.~Ingimarson]{Benjamin Ingimarson}
\address{Department of Mathematics, University of Southern California, Los Angeles, CA 90089}
\email{ingimars@usc.edu}

\author[I.~Kukavica]{Igor Kukavica}
\address{Department of Mathematics, University of Southern California, Los Angeles, CA 90089}
\email{kukavica@usc.edu}

\author[Q.~Xu]{Qi Xu}
\address{Department of Mathematics, University of Southern California, Los Angeles, CA 90089}
\email{xuqi@usc.edu}

  \chardef\forshowkeys=0
  \chardef\showllabel=0
  \chardef\refcheck=0
  \chardef\sketches=0
  \chardef\figures=1

\usepackage{enumitem,mathtools,needspace}
\usepackage{datetime}
\usepackage{fancyhdr}
\usepackage{comment}
\usepackage{listings}

\allowdisplaybreaks
\ifnum\forshowkeys=1

  \usepackage[notref,notcite,color]{showkeys}
\fi
\usepackage[margin=1in]{geometry}
\usepackage{amsmath, amsthm, amssymb}
\usepackage{times}
\usepackage{graphicx}
\usepackage[dvipsnames,svgnames,table]{xcolor}
\usepackage{marginnote}
\usepackage[unicode,breaklinks=true,colorlinks=true,linkcolor=blue,urlcolor=blue,citecolor=blue]{hyperref}
\usepackage[most]{tcolorbox}
\usepackage{tikz}

\ifnum\refcheck=1
  \usepackage{refcheck}
\fi

\usepackage{import}
\usepackage{xifthen}
\usepackage{pdfpages}
\usepackage{transparent}

\hypersetup{pdftitle={Regularity of asymptotically axisymmetric Navier-Stokes flows with rotationally invariant curl of the force},pdfauthor={Benjamin Ingimarson, Igor Kukavica, Qi Xu}}
\begin{document}
\def\YY{X}
\def\OO{\mathcal O}
\def\SS{\mathbb S}
\def\CC{\mathbb C}
\def\RR{\mathbb R}
\def\TT{\mathbb T}
\def\ZZ{\mathbb Z}
\def\HH{\mathbb H}
\def\RSZ{\mathcal R}
\def\LL{\mathcal L}
\def\SL{\LL^1}
\def\ZL{\LL^\infty}
\def\GG{\mathcal G}
\def\tt{\langle t\rangle}
\def\erf{\mathrm{Erf}}
\def\mgt#1{\textcolor{magenta}{#1}}
\def\ff{\rho}
\def\gg{G}
\def\sqrtnu{\sqrt{\nu}}
\def\ww{w}
\def\ft#1{#1_\xi}
\def\les{\lesssim}
\def\lec{\lesssim}
\def\ges{\gtrsim}
\def\gec{\gtrsim}
\renewcommand*{\Re}{\ensuremath{\mathrm{{\mathbb R}e\,}}}
\renewcommand*{\Im}{\ensuremath{\mathrm{{\mathbb I}m\,}}}
\ifnum\showllabel=1
 \def\llabel#1{\marginnote{\color{lightgray}\rm\small(#1)}[-0.0cm]\notag}
%%%%%%%%%%%%%%%%%%%\def\llabel#1{\label{#1}}
%  \reversemarginpar
  %\def\llabel#1{\notag}
%  \def\llabel#1{\label{#1}}
\else
% % \def\llabel#1{\nonumber}
 \def\llabel#1{\notag}
\fi
\newcommand{\norm}[1]{\left\|#1\right\|}
\newcommand{\nnorm}[1]{\lVert #1\rVert}
\newcommand{\abs}[1]{\left|#1\right|}
\newcommand{\NORM}[1]{|\!|\!| #1|\!|\!|}

\theoremstyle{plain}
\newtheorem{Theorem}{Theorem}
\newtheorem{Corollary}{Corollary}
\newtheorem{Proposition}{Proposition}
\newtheorem{Lemma}{Lemma}

\theoremstyle{definition}
\newtheorem{Remark}{Remark}
\newtheorem{definition}{Definition}

\def\theequation{\thesection.\arabic{equation}}
\numberwithin{equation}{section}
%%%%%%%%%%%%%%%%%%%%%%%%%%
\definecolor{mygray}{rgb}{.6,.6,.6}
\definecolor{myblue}{rgb}{9, 0, 1}
\definecolor{colorforkeys}{rgb}{1.0,0.0,0.0}
\newlength\mytemplen
\newsavebox\mytempbox
\makeatletter
\newcommand\mybluebox{%
    \@ifnextchar[%]
       {\@mybluebox}%
       {\@mybluebox[0pt]}}
\def\@mybluebox[#1]{%
    \@ifnextchar[%]
       {\@@mybluebox[#1]}%
       {\@@mybluebox[#1][0pt]}}
\def\@@mybluebox[#1][#2]#3{
    \sbox\mytempbox{#3}%
    \mytemplen\ht\mytempbox
    \advance\mytemplen #1\relax
    \ht\mytempbox\mytemplen
    \mytemplen\dp\mytempbox
    \advance\mytemplen #2\relax
    \dp\mytempbox\mytemplen
    \colorbox{myblue}{\hspace{1em}\usebox{\mytempbox}\hspace{1em}}}
\makeatother
%%%%%%%%%%%%%%%%%%%%%%%%%%

%Igor's macros  varmac
\def\pv{\text{p.v.}}
\def\bnew{\color{red}}
\def\enew{\color{black}}
\def\bold{\color{blue}}
\def\eold{\color{black}}
\def\rr{r}
\def\weaks{\text{\,\,\,\,\,\,weakly-* in }}
\def\inn{\text{\,\,\,\,\,\,in }}
\def\cof{\mathop{\rm cof\,}\nolimits}
\def\Dn{\frac{\partial}{\partial N}}
\def\Dnn#1{\frac{\partial #1}{\partial N}}
\def\tdb{\tilde{b}}
\def\tda{b}
\def\qqq{u}
\def\lat{\Delta_2}
\def\biglinem{\vskip0.5truecm\par==========================\par\vskip0.5truecm}
\def\inon#1{\hbox{\ \ \ \ \ \ \ }\hbox{#1}}                %in or on
\def\onon#1{\inon{on~$#1$}}
\def\inin#1{\inon{in~$#1$}}
\def\FF{F}
\def\andand{\text{\indeq and\indeq}}
\def\wherewhere{\text{\indeq where\indeq}}
\def\ww{w(y)}
\def\ll{{\color{red}\ell}}
\def\ee{\mathrm{e}}
\def\startnewsection#1#2{ \section{#1}\label{#2}\setcounter{equation}{0}}   
%\starts a new section\def\dist{\mathop{\rm dist}\nolimits}
\def\nnewpage{ }
\def\sgn{\mathop{\rm sgn\,}\nolimits}    
\def\Tr{\mathop{\rm Tr}\nolimits}    
\def\div{\mathop{\rm div}\nolimits}
\def\curl{\mathop{\rm curl}\nolimits}
\def\dist{\mathop{\rm dist}\nolimits}
\def\id{\mathop{\rm id}\nolimits}
\def\supp{\mathop{\rm supp}\nolimits}
\def\indeq{\quad{}}           
\def\period{.}                       
\def\semicolon{\,;}                  
\def\cmi#1{\text{~{{\coli IK: \underline{#1}}}~}}
\def\coli{\color{colorigor}}
\definecolor{colorigor}{rgb}{.5, 0.2, 0.8}
\def\colr{\color{red}}
\def\colrr{\color{black}}
\def\colb{\color{black}}
\def\coly{\color{lightgray}}
\definecolor{colorgggg}{rgb}{0.1,0.5,0.3}
\definecolor{colorllll}{rgb}{0.0,0.7,0.0}
\definecolor{colorhhhh}{rgb}{0.3,0.75,0.4}
\definecolor{colorpppp}{rgb}{0.7,0.0,0.2}
%\definecolor{coloroooo}{rgb}{0.9,0.4,0}
\definecolor{coloroooo}{rgb}{0.45,0.0,0.0}
\definecolor{colorqqqq}{rgb}{0.1,0.7,0}
\def\colg{\color{colorgggg}}
\def\collg{\color{colorllll}}
\def\cole{\color{coloroooo}}
\def\coleo{\color{colorpppp}}
\def\colu{\color{blue}}
\def\colc{\color{colorhhhh}}
\def\colW{\colb}   %color for weight
\definecolor{coloraaaa}{rgb}{0.6,0.6,0.6}%%%out
\def\colw{\color{coloraaaa}}
\def\comma{ {\rm ,\qquad{}} }            
\def\commaone{ {\rm ,\quad{}} }          
\def\les{\lesssim}
\def\nts#1{{\color{red}\hbox{\bf ~#1~}}} 
\def\ntsf#1{\footnote{\color{colorgggg}\hbox{#1}}} 
\def\blackdot{{\color{red}{\hskip-.0truecm\rule[-1mm]{4mm}{4mm}\hskip.2truecm}}\hskip-.3truecm}
\def\bluedot{{\color{blue}{\hskip-.0truecm\rule[-1mm]{4mm}{4mm}\hskip.2truecm}}\hskip-.3truecm}
\def\purpledot{{\color{colorpppp}{\hskip-.0truecm\rule[-1mm]{4mm}{4mm}\hskip.2truecm}}\hskip-.3truecm}
\def\greendot{{\color{colorgggg}{\hskip-.0truecm\rule[-1mm]{4mm}{4mm}\hskip.2truecm}}\hskip-.3truecm}
\def\cyandot{{\color{cyan}{\hskip-.0truecm\rule[-1mm]{4mm}{4mm}\hskip.2truecm}}\hskip-.3truecm}
\def\reddot{{\color{red}{\hskip-.0truecm\rule[-1mm]{4mm}{4mm}\hskip.2truecm}}\hskip-.3truecm}
\def\tdot{{\color{green}{\hskip-.0truecm\rule[-.5mm]{6mm}{6mm}\hskip.2truecm}}\hskip-.1truecm}
\def\gdot{\greendot}
\def\bdot{\bluedot}
\def\ydot{\cyandot}
\def\rdot{\cyandot}
\def\fractext#1#2{{#1}/{#2}}
\def\ii{\hat\imath}
%%%%%%%
\def\fei#1{\textcolor{blue}{#1}}
\def\vlad#1{\textcolor{cyan}{#1}}
\def\igor#1{\text{{\bf\textcolor{colorqqqq}{IK: #1}}}}
\def\igorf#1{\footnote{\text{{\textcolor{colorqqqq}{#1}}}}}
%varmac
%\def\AA{Y}
\def\Omf{\Omega_{\text f}}
\def\Ome{\Omega_{\text e}}
\def\Omb{\Omega_{\text b}}
\def\Gaf{\Gamma_{\text f}}
\def\Gae{\Gamma_{\text e}}
\def\Gab{\Gamma_{\text b}}
\def\Gac{\Gamma_{\text c}}
\def\Nf{N^{\text f}}
\def\Ne{N^{\text e}}

\newcommand{\p}{\partial}
\renewcommand{\d}{\mathrm{d}}
\newcommand{\dd}{\mathrm{d}}
\newcommand{\UE}{U^{\rm E}}
\newcommand{\PE}{P^{\rm E}}
\newcommand{\KP}{K_{\rm P}}
\newcommand{\uNS}{u^{\rm NS}}
\newcommand{\vNS}{v^{\rm NS}}
\newcommand{\pNS}{p^{\rm NS}}
\newcommand{\omegaNS}{\omega^{\rm NS}}
\newcommand{\uE}{u^{\rm E}}
\newcommand{\vE}{v^{\rm E}}
\newcommand{\pE}{p^{\rm E}}
\newcommand{\omegaE}{\omega^{\rm E}}
\newcommand{\ua}{u_{\rm   a}}
\newcommand{\va}{v_{\rm   a}}
\newcommand{\omegaa}{\omega_{\rm   a}}
\newcommand{\ue}{u_{\rm   e}}
\newcommand{\ve}{v_{\rm   e}}
\newcommand{\omegae}{\omega_{\rm e}}
\newcommand{\omegaeic}{\omega_{{\rm e}0}}
\newcommand{\ueic}{u_{{\rm   e}0}}
\newcommand{\veic}{v_{{\rm   e}0}}
\newcommand{\up}{u^{\rm P}}
\newcommand{\vp}{v^{\rm P}}
\newcommand{\tup}{{\tilde u}^{\rm P}}
\newcommand{\bvp}{{\bar v}^{\rm P}}
\newcommand{\omegap}{\omega^{\rm P}}
\newcommand{\tomegap}{\tilde \omega^{\rm P}}
\newcommand{\eps}{\varepsilon}  %ben's macro
\newcommand{\eqnb}{\begin{equation}}
\newcommand{\eqne}{\end{equation}}
  
\renewcommand{\up}{u^{\rm P}}
\renewcommand{\vp}{v^{\rm P}}
\renewcommand{\omegap}{\Omega^{\rm P}}
\renewcommand{\tomegap}{\omega^{\rm P}}

\begin{abstract}
We answer a question of Constantin, Ignatova, and Vicol concerning
regularity of asymptotically axisymmetric solutions to the
three-dimensional Navier-Stokes equations. For suitable weak solutions
that are smooth before a terminal time and have a force uniformly bounded
in $C_x^2$, we replace spatial analyticity of the force by rotational
invariance of its curl. Under the same anisotropic Type~II bounds on the
angular mean and exact axial symmetry on a possibly collapsing core, the
solution is axisymmetric on the entire cylinder and regular at the putative
singular point. No time regularity or quantitative lower bound on the
core radius is required. The proof combines unique continuation for
linear Stokes systems with a Baire category argument and the
axisymmetric regularity theorem due to Constantin, Ignatova, and Vicol.
\end{abstract}

%\subjclass[2020]{35Q30, 35B60, 35B65}
%\keywords{Navier-Stokes equations, axial symmetry, unique continuation, local regularity}
\maketitle

\section{Introduction}\label{sec01}
In this note, we consider the incompressible 3D Navier-Stokes equations,
\begin{align}\label{EQ00}
\begin{split}
 \partial_t u+(u\cdot\nabla)u-\Delta u+\nabla\pi&=f,\\
 \div u&=0
\end{split}
\end{align}
on the parabolic cylinder $Q=B(1)\times(-1,0)$, where $B(R)\subset\RR^3$ is the ball of radius
$R$ centered at the origin, $u \colon Q \to \mathbb{R}^3$ is the velocity, $\pi \colon Q \to \mathbb{R}$ the pressure, and
$f \colon Q \to \mathbb{R}^3$ is the force.
In particular, we establish a regularity criterion for $u$ under the assumption that $\curl f$ is rotationally invariant about the $x_3$-axis.

Recently, OpenAI announced a proof of finite time singularity formation for the 3D incompressible Navier-Stokes equations with $C^\infty$-smooth forcing, see the manuscript~\cite{OAI}. 
The construction satisfies two specific properties, see (i) and (ii) in Theorem~\ref{T01} below, which were closely examined in a recent paper by Constantin, Ignatova, and Vicol~\cite{CIV1}.
The property (i) consists of anisotropic bounds on the angular mean of
the velocity, while the property (ii) states that $u$ is axisymmetric on a collapsing core~$B(\rho(t))$.
For a verification that the construction in~\cite{OAI} satisfies (i) and (ii), see~\cite[Appendix~A]{CIV1}.
In their paper, they show that if a suitable weak solution $u$ satisfies (i) and (ii), and if the forcing term $f$ is \textit{real-analytic} in space, then the putative singularity $(0,0)$ is, in fact, a regular point.

In Theorem~\ref{T01} below, we consider the same hypotheses as in~\cite[Theorem~1.1]{CIV1} except we replace the assumption of spatially analytic forcing with rotational invariance of its curl about the $x_3$-axis. 
This answers the question posed in~\cite[Remark~2.5]{CIV1} in the affirmative. 
The main ingredient of our proof is unique continuation for solutions to parabolic PDEs.
Indeed, we show that exact axisymmetry on the collapsing core may be extended to the entire parabolic cylinder using a result by Fabre~\cite{F}.

Unique continuation for parabolic PDEs expresses a form of rigidity in which local vanishing propagates beyond the original zero set.
Using Carleman estimates, Fabre~\cite{F} proved that solutions of linear Stokes systems with bounded lower-order coefficients which vanish on an open space-time set must vanish throughout the corresponding connected spatial slices, and applied this result to approximate controllability.
For related unique continuation results for the Stokes system, see Fabre and Lebeau~\cite{FL}.
For quantitative stability estimates for the nonstationary Stokes problem and the lateral Cauchy problem for the linearized Navier-Stokes equations, see Boulakia~\cite{B} and Bellassoued, Imanuvilov, and Yamamoto~\cite{BIY}, respectively.

Next, we provide a brief survey of literature regarding regularity for axisymmetric solutions of Navier-Stokes.
For the unforced equations, Chen, Strain, Yau, and Tsai~\cite{CSYT,CSTY} excluded singularities under Type~I pointwise bounds on the velocity.
Related criteria were obtained by Koch, Nadirashvili, Seregin, and \v Sver\'ak~\cite{KNSS} through Liouville theorems for ancient solutions, and by Seregin and \v Sver\'ak~\cite{SS} under a local Type~I bound on the meridional velocity alone.
Beyond the Type~I setting, Seregin~\cite{Se} used Euler scaling to study certain potential Type~II blowup scenarios.
More directly related to~\cite{CIV1}, Constantin, Ignatova, and Vicol established regularity under Type~II bounds at a vanishing Euler length scale, see~\cite{CIV2}.
For other results on axisymmetric regularity and the developments surrounding the forced blowup construction, we refer to~\cite{CIV1} and the references therein.

The present paper is organized as follows.
In Section~\ref{sec02}, we state our main result, Theorem~\ref{T01}, which states our regularity criterion for $u$ under the assumption that $\curl f$ is rotationally invariant in~$Q$.
We also note in Remark~\ref{R02} that the spatial analyticity assumption in~\cite[Theorem~1.1]{CIV1} may be replaced by a weaker statement. 
Next, we state Theorem~\ref{T02} which is proven in~\cite[Theorem~1.3]{CIV1}. 
In Section~\ref{sec03}, we prove Theorem~\ref{T01} through a unique continuation argument using a result by Fabre, see~\cite[Theorem~1.4]{F}.
We begin the proof by writing a parabolic equation for the difference $u- \mathcal{R}_{\varphi} u$, where $\mathcal{R}_{\varphi} u$ is the rotated velocity. 
The resulting force is irrotational by assumption and, hence, admits an explicit scalar potential~$q$. 
This gives a homogeneous parabolic equation for $u- \mathcal{R}_{\varphi} u $ after absorbing $\nabla q$ into the pressure.
Our main idea is to use unique continuation to expand the exact axisymmetry on the collapsing core region $B(\rho(t))$ to the entire parabolic cylinder. 
However, there is no assumption on the regularity of the time-dependent radius $\rho(t)$ for $t \in (-1,0)$.
Indeed, if $\rho$ were continuous or monotonic, we could employ a compactness argument to obtain an open set $B(r) \times J$, where $J \Subset (-1,0)$ is arbitrary, from which unique continuation yields our result.
Without any regularity of $\rho$, this argument is ineffective.
Instead, we use the Baire category theorem combined with unique continuation~\cite[Theorem~1.4]{F} to show that $u - \mathcal{R}_{\varphi}u$ vanishes on $B(1)\times K$ where $K$ is dense in $(-1,0)$. 
The desired result is then obtained by continuity of~$u$.
It is particularly notable that this theorem and Theorem~1.1 in~\cite{CIV1} hold with no regularity assumptions on the radius~$\rho(t)$.

\section{Preliminaries and main result}\label{sec02}
For $\varphi \in [0,2\pi)$, let $Q_\varphi$ denote rotation about the $z$-axis
and define
\begin{align}\label{EQ104}
 (\mathcal R_\varphi g)(x,t)
 =Q_\varphi g(Q_\varphi^{-1}x,t),
 \qquad
 \mathcal Pg=\frac1{2\pi}\int_0^{2\pi}\mathcal R_\varphi g\,\dd\varphi.
\end{align}
In this paper, a suitable weak solution of~\eqref{EQ00} is a
distributional solution $(u,\pi)$ satisfying
\begin{align}\label{EQ105}
 u\in L^\infty(-1,0;L^2(B(1)))\cap L^2(-1,0;H^1(B(1))),
 \qquad \pi\in L^{3/2}(Q),
\end{align}
and the local energy inequality with source $f\cdot u$, in the sense
of~\cite{CKN}; see also~\cite{L}.
Throughout the paper, we work with solutions
that are smooth on compact subsets of $Q$, so $f$ is smooth there as
well. No regularity at the terminal time $t=0$ is assumed.

\begin{Theorem}\label{T01}
Let $(u,\pi)$ be a suitable weak solution of~\eqref{EQ00} on $Q$, smooth
on compact subsets of $Q$, and let $0 < h < 1/2$. 
Suppose that
\begin{align}\label{EQ100}
 \sup_{-1<t<0}\|f(\cdot,t)\|_{C_x^2(B(1))}<\infty.
\end{align}
Furthermore, assume that the curl of the force is rotationally invariant:
\begin{align}\label{EQ101}
 \mathcal R_\varphi(\curl f)=\curl f
 \qquad\text{on }Q,\qquad \varphi\in[0,2\pi).
\end{align}
Suppose also that the velocity has the following properties.
\begin{itemize}[leftmargin=2em]
\item[(i)] For some time-independent constant $C_{\rm axi}>0$, its angular mean
$v=\mathcal Pu$ satisfies
\begin{align}\label{EQ102}
\begin{split}
 \|\partial_r^a\partial_z^b v_r(\cdot,t)\|_{L^\infty(B(1))}
 &\leq C_{\rm axi}(-t)^{-1/2-a/2-(1/2-h)b},\\
 \|\partial_r^a\partial_z^b v_z(\cdot,t)\|_{L^\infty(B(1))}
 +\|\partial_r^a\partial_z^b v_\theta(\cdot,t)\|_{L^\infty(B(1))}
 &\leq C_{\rm axi}(-t)^{-1/2-h-a/2-(1/2-h)b}
\end{split}
\end{align}
for $-1<t<0$ and all nonnegative integers $a,b$ with $a+b\leq2$.
\item[(ii)] For each $t\in(-1,0)$ there is a radius $\rho(t)\in(0,1)$ such that the solution is exactly axisymmetric on~$B(\rho(t))$.
Equivalently,
\begin{align}\label{EQ106}
 u(\cdot,t)=\mathcal Pu(\cdot,t)
 \qquad\text{on }B(\rho(t)).
\end{align}
No time regularity or quantitative lower bound is imposed on~$\rho$.
\end{itemize}
Then $u=\mathcal Pu$ on $Q$, and $(0,0)$ is a regular point: there exist
$r_\ast,\delta_\ast>0$ such that
\begin{align}\label{EQ103}
 \sup_{B(r_\ast)\times(-\delta_\ast,0)}|u|<\infty.
\end{align}
\end{Theorem}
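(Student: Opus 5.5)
Fix $\varphi\in[0,2\pi)$ and set $w=u-\mathcal R_\varphi u$ and $\sigma=\pi-\mathcal R_\varphi\pi$, where $\mathcal R_\varphi$ acts on scalars by $\pi(Q_\varphi^{-1}x,t)$. The plan has four steps: derive a linear homogeneous Stokes system for $w$; use the Baire category theorem to find open space-time boxes where $w=0$; spread this vanishing to all of $Q$ by the unique continuation theorem of Fabre~\cite[Theorem~1.4]{F}; and finally invoke the axisymmetric regularity result, Theorem~\ref{T02}.

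\emph{Step 1: the equation for $w$.} The Navier--Stokes system is invariant under rotations about the $x_3$-axis, so $\mathcal R_\varphi u$ solves \eqref{EQ00} with pressure $\mathcal R_\varphi\pi$ and force $\mathcal R_\varphi f$. Subtracting the two systems gives
\begin{align*}
\partial_t w-\Delta w+(u\cdot\nabla)w+(w\cdot\nabla)\mathcal R_\varphi u+\nabla\sigma=f-\mathcal R_\varphi f,\qquad \div w=0 .
\end{align*}
By \eqref{EQ101}, $\curl(f-\mathcal R_\varphi f)=0$. Since $B(1)$ is simply connected, the right-hand side equals $\nabla q$, with the explicit potential $q(x,t)=\int_0^1 (f-\mathcal R_\varphi f)(sx,t)\cdot x\,\dd s$, which is smooth in $x$. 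We absorb $\nabla q$ into the pressure by replacing $\sigma$ with $\sigma-q$. On any compact subcylinder $B(R)\times[t_1,t_2]\Subset Q$, the coefficients $u$ and $\nabla\mathcal R_\varphi u$ are bounded because the solution is smooth there. So $w$ solves a linear Stokes system with bounded lower-order coefficients, which is exactly the setting of \cite[Theorem~1.4]{F}.

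\emph{Step 2: Baire category (the main obstacle).} The difficulty is that $\rho$ has no regularity, so we cannot directly produce an open space-time set on which $w=0$. Fix a nonempty open interval $I\Subset(-1,0)$ and define $E_n=\{t\in \bar I:\rho(t)\ge 1/n\}$. These sets cover $\bar I$, but they need not be closed. We therefore use instead
\begin{align*}
F_n=\{t\in\bar I: u(\cdot,t)=\mathcal Pu(\cdot,t)\text{ on }B(1/n)\}.
\end{align*}
Each $F_n$ is closed, because $u$ and $\mathcal Pu$ are continuous on $B(1)\times\bar I$. We have $E_n\subset F_n$ by \eqref{EQ106}, so $\bigcup_n F_n=\bar I$. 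By the Baire category theorem, some $F_n$ contains an open interval $J\subset I$. Then $u=\mathcal Pu$ on $B(1/n)\times J$, and hence $w=0$ there for every $\varphi$ simultaneously.

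\emph{Step 3: unique continuation and density.} Fabre's theorem, applied on compact sub-balls and shrinking the time interval slightly, gives $w(\cdot,t)=0$ on all of $B(1)$ for $t$ in an interval $J'\subset J$ (or at least for $t\in J$). Since $I$ was an arbitrary open subinterval, the set $K$ of times with $u(\cdot,t)=\mathcal R_\varphi u(\cdot,t)$ on $B(1)$ is dense in $(-1,0)$. By continuity of $u$ in the interior of $Q$, we get $u=\mathcal R_\varphi u$ on all of $Q$, for every $\varphi$. Averaging over $\varphi$ then yields $u=\mathcal Pu$ on $Q$.

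\emph{Step 4: regularity at $(0,0)$.} Now $u$ is axisymmetric on $Q$ and $v=\mathcal Pu=u$ satisfies the bounds \eqref{EQ102}. The force satisfies \eqref{EQ100}; if Theorem~\ref{T02} requires the force itself to be axisymmetric, we replace $f$ by $\mathcal Pf$. Indeed, $\mathcal Pf-f$ is a gradient by Step 1, so it can be absorbed into $\pi$, and $\mathcal Pf$ still has bounded $C^2_x$ norm. Theorem~\ref{T02} \cite[Theorem~1.3]{CIV1} then gives \eqref{EQ103}. The remaining point to check is that the hypotheses of Theorem~\ref{T02} on the pressure (suitability and the $L^{3/2}$ bound) survive this modification. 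This holds because $q$ is smooth in $x$ with bounds uniform in $t$, by \eqref{EQ100}.
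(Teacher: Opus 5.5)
Your proposal is correct and follows the paper's proof essentially step for step. Both arguments take the rotated difference, absorb the radial-integral potential of $f-\mathcal R_\varphi f$ into the pressure, and apply Baire category to the closed sets of times at which the symmetry holds on $B(1/n)$; both then use Fabre's unique continuation, density plus continuity in time, and Theorem~\ref{T02}. The only cosmetic difference is in the last step, where you absorb the gradient $f-\mathcal Pf$ into the pressure instead of averaging the whole equation. This gives exactly the paper's $\bar\pi=\pi-q$, with $q(x,t)=\int_0^1(f-\mathcal Pf)(sx,t)\cdot x\,\dd s$, because both pressures have gradient $\mathcal Pf-(\partial_tu+(u\cdot\nabla)u-\Delta u)$ and agree on the axis, so your modified pressure is automatically axisymmetric as well.
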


\begin{Remark}[Invariance under a single rotation]\label{R01}
In~\eqref{EQ101}, it is sufficient to assume invariance under one fixed
rotation through an angle $\varphi_\ast$ with
$\varphi_\ast/(2\pi)\notin\mathbb Q$. Indeed, iterating this rotation
and its inverse gives
\begin{align*}
 \mathcal R_{n\varphi_\ast}(\curl f)=\curl f
 \qquad\text{for every }n\in\mathbb Z.
\end{align*}
The angles $n\varphi_\ast$ modulo $2\pi$ are dense in the circle.
Continuity of $\curl f$ and of the rotation action therefore implies
\eqref{EQ101} for every angle.
\end{Remark}

\begin{Theorem}[{\cite[Theorem~1.3]{CIV1}}]\label{T02}
Let $(u,\pi)$ be an axisymmetric suitable weak solution of~\eqref{EQ00}
on~$Q$, smooth on compact subsets of $Q$, with a force
satisfying~\eqref{EQ100}. Let $0 < h < 1/2$. Suppose that, for some  
$C_{\rm axi}>0$,
\begin{align}\label{EQ107}
\begin{split}
 |\partial_r^a\partial_z^b u_r(r,z,t)|
 &\leq C_{\rm axi}(-t)^{-1/2-a/2-(1/2-h)b},\\
 |\partial_r^a\partial_z^b u_z(r,z,t)|
 +|\partial_r^a\partial_z^b u_\theta(r,z,t)|
 &\leq C_{\rm axi}(-t)^{-1/2-h-a/2-(1/2-h)b}
\end{split}
\end{align}
on $Q$, for all nonnegative integers $a,b$ with $a+b\leq2$.
Then $(0,0)$ is a regular point.
\end{Theorem}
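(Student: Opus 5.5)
The plan is to reduce the statement to a local smallness criterion of Caffarelli--Kohn--Nirenberg type at $(0,0)$, and to use the axisymmetric structure to pay for the Type~II loss $h$ in~\eqref{EQ107}. Write $u=u_re_r+u_\theta e_\theta+u_ze_z$ and set
\begin{align*}
 \Gamma=ru_\theta,\qquad \Omega=\omega_\theta/r,\qquad J=\omega_r/r .
\end{align*}
Let $F_\theta$ be the angular component of $f$.

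\textbf{Step 1: bound the swirl.} The function $\Gamma$ satisfies
\begin{align*}
 \partial_t\Gamma+(u_r\partial_r+u_z\partial_z)\Gamma-\Big(\Delta-\frac2r\partial_r\Big)\Gamma=rF_\theta .
\end{align*}
This equation has a maximum principle in the smooth region $t<0$. The right-hand side is bounded by~\eqref{EQ100}, and $\Gamma$ is controlled on the parabolic boundary of $B(1)\times(-1,t)$; on the lateral part this uses~\eqref{EQ107} with $r\sim1$ together with the local energy inequality. Hence $|\Gamma|\le C$ up to $t=0$. Combined with~\eqref{EQ107}, this gives
\begin{align*}
 |u_\theta|\lesssim\min\bigl(r^{-1},(-t)^{-1/2-h}\bigr).
\end{align*}
So the Type~II part of $u_\theta$ only lives in the shell $r\gtrsim(-t)^{1/2+h}$.

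\textbf{Step 2: estimates for $(\Omega,J)$.} The pair $(\Omega,J)$ solves the coupled system in the style of Chen--Fang--Zhang,
\begin{align*}
 \partial_t\Omega+b\cdot\nabla\Omega-\Big(\Delta+\frac2r\partial_r\Big)\Omega
 &=-\frac{\partial_z(u_\theta^2)}{r^2}+\frac{(\curl F)_\theta}{r},\\
 \partial_t J+b\cdot\nabla J-\Big(\Delta+\frac2r\partial_r\Big)J
 &=(\omega_r\partial_r+\omega_z\partial_z)\frac{u_r}{r}+\frac{(\curl F)_r}{r},
\end{align*}
where $b=u_re_r+u_ze_z$. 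I would do $L^2$ (or $L^p$) energy estimates for $(\Omega,J)$ on a shrinking family of parabolic cylinders. The anisotropic structure of~\eqref{EQ107} is what makes the time integrals converge. The dangerous coefficients are $\partial_z u_r$ and $u_r/r$ (through $\partial_r u_r$), and $\partial_z u_r\lesssim(-t)^{-1+h}$ is integrable in time. The swirl source $\partial_z(u_\theta^2)/r^2=\partial_z(\Gamma^2)/r^4$ is handled by integrating by parts onto $\Omega$ and using $|\Gamma|\le C$. For the term with $u_r/r$, I would use a Hardy inequality in $r$ together with the $a=1$ bound on $u_r$ only inside the core $r\le(-t)^{1/2+h}$. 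Outside the core I would use the scale separation: $\partial_z$ costs only $(-t)^{-(1/2-h)}$ while $r^{-1}$ is at most $(-t)^{-1/2-h}$. The target is a time-uniform bound on $\|\Omega\|_{L^2}$ and $\|J\|_{L^2}$ on $B(r_0)$.

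\textbf{Step 3: conclude regularity.} From the bound on $\Omega$, the Biot--Savart law for the meridional field, $\omega_\theta=r\Omega$, gives a scale-invariant bound on $b$, e.g.\ $\|b\|_{L^\infty_tL^3_x}$ after localization. Together with $|u_\theta|\lesssim 1/r$ with small constant near the axis, this lets me verify the smallness of $\limsup_{\lambda\to0}\lambda^{-2}\int_{Q_\lambda}|u|^3+|\pi|^{3/2}$. The pressure is localized through the usual harmonic/Calder\'on--Zygmund splitting, and the force is bounded by~\eqref{EQ100}. Then CKN~\cite{CKN} gives~\eqref{EQ103}, i.e.\ $(0,0)$ is a regular point.

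\textbf{Main obstacle.} I expect the hardest part to be Step~2, namely the stretching term $(\omega_r\partial_r+\omega_z\partial_z)(u_r/r)$ in the $J$ equation. There $\partial_r u_r\sim(-t)^{-1}$ is borderline non-integrable, so the core/shell splitting with the precise exponents $1/2\pm h$ must be arranged so that a factor $(-t)^{h}$ or $(-t)^{2h}$ is gained. I would first try a weighted energy with weight $(-t)^{-\epsilon}$ and interpolation between the $a=1$ and $a=2$ bounds of~\eqref{EQ107}. If that fails, I would instead perform an anisotropic rescaling at the Euler length scale $(-t)^{1/2-h}$ in $z$, as in~\cite{CIV2}, and run the estimates in the rescaled variables.
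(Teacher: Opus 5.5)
The paper does not prove this statement. It is quoted from \cite[Theorem~1.3]{CIV1} and used as a black box once Proposition~\ref{P01} has produced exact axisymmetry. Judged as a self-contained proof, your proposal has a genuine gap, located where the Type~II difficulty lives. Step~2 is never carried out, and your ``Main obstacle'' paragraph leaves it open (``I would first try \dots; if that fails \dots'').

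The bounds \eqref{EQ107} do not close the $L^2$ estimates you describe:
\begin{itemize}
\item Since $u_r$ vanishes on the axis, $|u_r/r|\le\sup|\partial_r u_r|\le C_{\rm axi}(-t)^{-1}$. The piece $\omega_r\partial_r(u_r/r)=J(\partial_r u_r-u_r/r)$ of the stretching term therefore has a coefficient of size exactly $(-t)^{-1}$. Gr\"onwall then gives only $\|J(t)\|_{L^2}^2\lesssim(-t)^{-cC_{\rm axi}}$, not a uniform bound.
\item The other piece, $\omega_z\partial_z(u_r/r)$, is worse. \eqref{EQ107} controls it only by $|\omega_z|\,\sup|\partial_r\partial_z u_r|\lesssim(-t)^{-1-h}(-t)^{-3/2+h}$.
\item Your core/shell splitting gains nothing. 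In the shell $r\ge(-t)^{1/2+h}$, the bound $|u_r|/r\lesssim(-t)^{-1/2}r^{-1}$ is worse than $(-t)^{-1}$ all the way down to $r\sim(-t)^{1/2}$, and nothing prevents $J$ from concentrating in $r\lesssim(-t)^{1/2}$.
\item The swirl source in the $\Omega$ equation cannot be handled by integrating by parts and using $|\Gamma|\le C$. After integration by parts you need $\Gamma^2r^{-4}\in L^2$ near the axis, but $|\Gamma|\le C$ only bounds $\|\Gamma^2r^{-4}\|_{L^2}^2$ by a multiple of $\int_0 r^{-7}\,dr=\infty$.
\item The true vanishing $\Gamma=O(r^2)$ is quantified only through $\sup|\partial_r u_\theta|\lesssim(-t)^{-1-h}$. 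That reintroduces non-integrable powers of $(-t)^{-1}$.
\end{itemize}
No mechanism producing the needed gain $(-t)^{h}$ is given. So the time-uniform bound on $(\Omega,J)$, on which Step~3 rests, is not established.

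Steps~1 and~3 also need repair.

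In Step~1, \eqref{EQ107} at $r\sim1$ gives only $|\Gamma|\lesssim(-t)^{-1/2-h}$ on the lateral boundary, and the local energy inequality gives no pointwise information. What you actually need is boundedness of $u$ up to $t=0$ on the parabolic boundary of a cylinder $\{r<R_0,\ |z|<Z_0\}$. This does follow from three facts:
\begin{itemize}
\item CKN partial regularity: the singular set has zero one-dimensional parabolic Hausdorff measure.
\item Axisymmetry: an off-axis singular point would generate a whole circle of singular points, contradicting the previous item.
\item A choice of $Z_0$ such that $(0,0,\pm Z_0,0)$ are regular points.
\end{itemize}

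In Step~3, the smallness of $\Gamma$ near the axis is asserted, not derived. Even granting $|\Gamma|\le\epsilon$ there, combining it with $|u_\theta|\le C_{\rm axi}(-t)^{-1/2-h}$ gives only $\lambda^{-2}\int_{Q_\lambda}|u_\theta|^3\lesssim\epsilon^2\lambda^{-2h}$, which does not tend to zero. Note also that $r^{-1}\notin L^2_{\rm loc}(\RR^3)$, let alone $L^3_{\rm loc}$. So the conclusion cannot go through the $L^3$ CKN criterion as you state it. It would have to use an axisymmetric criterion, for instance of Chen--Fang--Zhang type from uniform bounds on $(\Omega,J)$, and that brings you back to the missing Step~2.
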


In particular, Theorem~\ref{T02} requires no analyticity of the force.
When applying it, we will average the force and pressure over rotations
so that both are axisymmetric.

\begin{Remark}[Localized analyticity of the force]\label{R02}
The spatial analyticity assumption in~\cite[Theorem~1.1]{CIV1} can
also be localized. 
Indeed, we retain the other hypotheses of that theorem, and
suppose that, for some $R,\tau\in(0,1)$ and a function
$\sigma:(-\tau,0)\to[0,R)$ with $\sigma(t)<\rho(t)$, the force is
real analytic in space, locally uniformly in time, on
\begin{align*}
 \{(x,t):-\tau<t<0,\ \sigma(t)<|x|<R\}.
\end{align*}
Then $u=\mathcal Pu$ on $B(R)\times(-\tau,0)$, and $(0,0)$ is
regular. 
To see this, note that by~\cite[Theorem~2.1]{CIV1}, the difference
$u-\mathcal R_\varphi u$ is real analytic on the connected,
rotation-invariant shell $\{\sigma(t)<|x|<R\}$ at each fixed time.
It vanishes on the nonempty open intersection of this shell with~$B(\rho(t))$.
The identity theorem therefore extends axisymmetry
to the whole shell and, together with the core, to~$B(R)$.
Averaging the pressure and force and applying Theorem~\ref{T02}
on a smaller cylinder by scaling gives regularity.
No analyticity is required in the smaller core $|x|\leq\sigma(t)$;
for example, when $\rho(t)=c\sqrt{-t}$, one may take
$\sigma(t)=\rho(t)/2$ after shortening the time interval.
\end{Remark}

\section{Propagation of symmetry and proof of the main result}\label{sec03}
We first recall the unique continuation theorem in the form needed here.

\begin{Lemma}[{\cite[Theorem~1.4]{F}}]\label{L01}
Let $\Omega\subset\RR^3$ be a bounded connected domain with smooth
boundary and $I$ a bounded open interval.
Let $a^j_{kl},b^j_{kl}\in L^\infty_{\rm loc}(\Omega\times I)$
for $1\leq j,k,l\leq3$, and let
\begin{align*}
 d\in L^2_{\rm loc}(I;H^1_{\rm loc}(\Omega;\RR^3)),
 \qquad p\in L^2_{\rm loc}(\Omega\times I)
\end{align*}
be a distributional solution on $\Omega\times I$ of
\begin{align}\label{EQ301}
\begin{split}
 \partial_t d_j-\Delta d_j
 +\sum_{k,l=1}^3a^j_{kl}\partial_k d_l
 +\sum_{k,l=1}^3\partial_k(b^j_{kl}d_l)
 &=\partial_jp,\qquad j=1,2,3,\\
 \div d&=0
\end{split}
\end{align}
If $d$ vanishes on a nonempty
open set $O\subset\Omega\times I$, then it vanishes on
\begin{align}\label{EQ302}
 C(O)=\{(x,t)\in\Omega\times I:
       (y,t)\in O\text{ for some }y\in\Omega\}.
\end{align}
In particular, if $d=0$ on $B(r)\times I$ with $B(r)\subset\Omega$,
then $d=0$ on~$\Omega\times I$.
\end{Lemma}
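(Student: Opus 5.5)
The ``in particular'' clause follows from the definition \eqref{EQ302}. If $O=B(r)\times I$, then for every $(x,t)\in\Omega\times I$ the point $(0,t)$ lies in $O$, so $C(O)=\Omega\times I$. The substance is therefore the main assertion. By connectedness of $\Omega$ and a chaining argument, it reduces to a \emph{local} unique continuation statement. Suppose $d=0$ on $B(x_0,\epsilon)\times J$ for some ball and some open $J\subset I$. Then $d=0$ on $B(x_0,\epsilon')\times J'$ for a slightly larger $\epsilon'$ and any $J'\Subset J$. Iterating this along chains of overlapping balls joining any point of $\Omega$ to $x_0$ shows that $d$ vanishes on $\Omega\times J'$ for every $J'\Subset J$. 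Since $O$ is a union of such products, this gives $d=0$ on $C(O)$.

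For the local step I would follow Fabre's Carleman strategy. Fix a smooth spatial function $\psi$ with $|\nabla\psi|>0$ away from $x_0$ and level sets that are convex toward the region where $d$ vanishes. Use a weight of the form $\theta(x,t)=e^{\lambda\psi(x)}/\bigl((t-t_1)(t_2-t)\bigr)$, with $[t_1,t_2]\subset J$, which blows up at the ends of the time interval. The first ingredient is a Carleman estimate for the heat operator $\partial_t-\Delta$ with large parameter $s$:
\[
s^3\lambda^4\|\theta^{3/2}e^{s\theta}w\|^2+s\lambda^2\|\theta^{1/2}e^{s\theta}\nabla w\|^2\lesssim \|e^{s\theta}(\partial_t-\Delta)w\|^2 .
\]
The second is a variant in which the right side is in divergence form $\partial_k F_k$, measured in a weighted $L^2$ norm of $F$ with a loss of one power of $s$. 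This variant absorbs the terms $\partial_k(b^j_{kl}d_l)$ using only $b\in L^\infty$. I would apply these estimates to $\chi d$, where $\chi$ is a cutoff, and take $s$ large to absorb the lower-order terms $a\,\partial d$ and $\partial(b\,d)$. The cutoff commutators are supported where $d$ already vanishes or where the weight is small, and this yields vanishing on a neighborhood.

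The main obstacle is the pressure. The term $\partial_j p$ is not lower order, and with merely $L^\infty$ coefficients one cannot eliminate it by taking the curl, since $\curl\,\partial_k(b\,d)$ involves second derivatives of $b\,d$. My plan is to use $\div d=0$. Taking the divergence of the equation gives
\[
\Delta p=\partial_j\Bigl(\sum a^j_{kl}\partial_k d_l\Bigr)+\partial_j\partial_k\Bigl(\sum b^j_{kl}d_l\Bigr),
\]
so $p$ solves an elliptic equation whose right side is controlled by $d$ in $H^{-1}$ and $H^{-2}$ type norms. I would couple the parabolic Carleman estimate for $d$ with an elliptic Carleman estimate for $p$ using the same weight $e^{s\theta}$ at each time. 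This lets $s^{-1/2}$ times the weighted norm of $p$ be bounded by weighted norms of $d$ and $\nabla d$, which are then absorbed for large $s$. A technical point is that $p$ is determined only up to a function of time. This is harmless because only $\nabla p$ enters, and one may normalize $p$ to vanish on the region where $d=0$, where $\nabla p=0$. If the coupled estimate proves too lossy, I would instead follow Fabre literally and work with the weighted $H^{-1}$ Carleman estimate for the full Stokes system. In either case this pressure absorption is the step I expect to require the most care.
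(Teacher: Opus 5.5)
The paper gives no proof of Lemma~\ref{L01}; it imports it from \cite[Theorem~1.4]{F}. Your derivation of the ``in particular'' clause from \eqref{EQ302} is correct. Your outline of an independent proof, however, has two genuine gaps.

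\textbf{The weight runs the wrong way in time.} With $\theta=e^{\lambda\psi}/((t-t_1)(t_2-t))>0$, the factor $e^{s\theta}$ blows up at $t_1$ and $t_2$. So for $\chi d$ not vanishing near the ends, $\|e^{s\theta}(\partial_t-\Delta)(\chi d)\|$ is in general infinite, and the estimate says nothing. If you also cut off in time, the commutator $(\partial_t\chi)d$ lives near $t_1,t_2$ on the annulus where $d$ is unknown. There $e^{s\theta}$ is at least as large as anywhere on the target set $B(x_0,\epsilon')\times J'$. Your claim that the commutators sit where the weight is small is therefore false.

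The standard choice is the Fursikov--Imanuvilov weight $e^{-s\alpha}$ with $\alpha=(e^{2\lambda\|\psi\|_\infty}-e^{\lambda\psi})/((t-t_1)(t_2-t))$. It decays to $0$ at both ends, so no time cutoff is needed. Even then the spatial gain $\epsilon'-\epsilon$ degenerates as $J'\uparrow J$, so it is not uniform in $J'\Subset J$. Your chaining has to be run on time intervals shrinking geometrically about each fixed $t_0$.

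\textbf{The pressure step is scale-critical, so large $s$ absorbs nothing.} Write $\|\cdot\|_s$ for the weighted $L^2$ norm. By your own exponents, the divergence-form heat estimate gives $s^{1/2}\|d\|_s\lesssim\|p\|_s+\cdots$. The elliptic Carleman estimate loses $s^{1/2}$ on the $H^{-2}$ source $\partial_j\partial_k(b^j_{kl}d_l)$, so it gives exactly $\|p\|_s\lesssim s^{1/2}\|b\|_{L^\infty}\|d\|_s+s^{-1/2}\|a\|_{L^\infty}\|\nabla d\|_s$. Substituting, the pressure contributes $C(\|a\|_{L^\infty}+\|b\|_{L^\infty})$ times your left-hand side $s^{1/2}\|d\|_s+s^{-1/2}\|\nabla d\|_s$, with no negative power of $s$.

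Eliminating $p$ through $\omega=\curl d$ gives the same balance. The $s^{1/2}$ lost on the source $\curl\partial_k(bd)\in H^{-2}$ is exactly offset by the $s^{1/2}$ gained in recovering $d$ from $-\Delta d=\curl\omega$. To close you need a genuinely small factor. One option is parabolic rescaling $x\mapsto rx$, $t\mapsto r^2t$, which turns $(a,b)$ into $(ra,rb)$. Another is a Carleman estimate for the Stokes system as a whole, as in Fabre--Lebeau~\cite{FL}. Your fallback of following Fabre literally is just the citation the paper already makes.

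For the paper's purposes, note that Proposition~\ref{P01} uses the lemma only with $a,b$ built from $u$ and $U$. These are smooth on $\overline{B(R)}\times\overline{I'}$. In that setting the curl route does close. Besides the transport term $(u\cdot\nabla)\omega$, the vorticity equation has only sources of first order in $d$. The bound $\|\nabla d\|_s\lesssim s^{1/2}\|\omega\|_s$ then leaves a full power of $s$ to spare.
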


The next proposition contains the main argument using unique continuation.

\begin{Proposition}\label{P01}
Let $(u,\pi)$ be a solution of~\eqref{EQ00} that is smooth on compact subsets
of~$Q$. If~\eqref{EQ101} and~\eqref{EQ106} hold, then
$u=\mathcal Pu$ on~$Q$.
\end{Proposition}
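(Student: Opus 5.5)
Fix $\varphi\in[0,2\pi)$ and set $d=u-\mathcal R_\varphi u$. The plan is to show $d\equiv0$ on $Q$ for every $\varphi$. Averaging over $\varphi$ then gives $u=\mathcal Pu$.

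\textbf{Step 1: an equation for $d$.} Since the Laplacian, divergence and the nonlinearity commute with rotations, $(\mathcal R_\varphi u,\tilde\pi)$ with $\tilde\pi(x,t)=\pi(Q_\varphi^{-1}x,t)$ solves \eqref{EQ00} with force $\mathcal R_\varphi f$. Subtracting the two systems gives
\begin{align*}
 \partial_t d-\Delta d+(u\cdot\nabla)d+(d\cdot\nabla)\mathcal R_\varphi u+\nabla(\pi-\tilde\pi)=f-\mathcal R_\varphi f,\qquad \div d=0.
\end{align*}
By \eqref{EQ101}, $\curl(f-\mathcal R_\varphi f)=\curl f-\mathcal R_\varphi\curl f=0$ on $Q$. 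Each time slice $B(1)$ is simply connected, so $f-\mathcal R_\varphi f=\nabla q$, where
\[
 q(x,t)=\int_0^1 x\cdot(f-\mathcal R_\varphi f)(sx,t)\,\dd s .
\]
Absorb $\nabla q$ into the pressure, $p=q-\pi+\tilde\pi$. The system is then of the form \eqref{EQ301} with $a^j_{kl}=\delta_{jl}u_k$ and $b^j_{kl}=\delta_{kj}\cdot$(zero), with the term $d_l\partial_l(\mathcal R_\varphi u)_j$ written as a zeroth-order term. Lemma~\ref{L01} allows bounded zeroth-order terms in divergence form: write $d_l\partial_l w_j=\partial_l(d_lw_j)$ using $\div d=0$, with $w=\mathcal R_\varphi u$. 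All coefficients are bounded on $B(1)\times J$ for any $J\Subset(-1,0)$, but only on $B(R)\times J$ for $R<1$, since $u$ is smooth only on compact subsets of $Q$. I therefore apply Lemma~\ref{L01} with $\Omega=B(R)$, $R<1$, and $I=J$, and let $R\to1$ at the end. The required integrability of $d$ and $p$ holds there by smoothness.

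\textbf{Step 2: Baire category.} This is the main obstacle, since $\rho$ has no regularity. For $n\ge2$ let
\[
 E_n=\{t\in[-1+1/n,-1/n]:\ d(\cdot,t)=0\text{ on }\overline{B(1/n)}\}.
\]
By continuity of $d$ in $(x,t)$, each $E_n$ is closed. By \eqref{EQ106}, $\mathcal R_\varphi u=u$ on $B(\rho(t))$, because $\mathcal R_\varphi\mathcal P u=\mathcal Pu$ and $B(\rho(t))$ is rotation invariant. Hence $\bigcup_n E_n=(-1,0)$.

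Now fix an open interval $J\Subset(-1,0)$. The set $\overline J$ is a complete metric space and equals $\bigcup_n(E_n\cap\overline J)$. By Baire's theorem, every open subinterval $J'\subset J$ contains some $E_n\cap\overline{J'}$ with nonempty interior, that is, an interval $I'\subset J'$ on which $d=0$ on $B(1/n)\times I'$. Lemma~\ref{L01} applied on $B(R)\times I'$ then gives $d=0$ on $B(R)\times I'$. Consequently, the set $K$ of times $t$ with $d(\cdot,t)=0$ on $B(R)$ contains an interval inside every open subinterval of $(-1,0)$, so $K$ is dense.

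\textbf{Step 3: conclusion.} Continuity of $d$ on $B(R)\times(-1,0)$ and density of $K$ give $d=0$ there. Letting $R\to1$, then averaging over $\varphi$, yields $u=\mathcal Pu$ on $Q$.

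One point needs checking: that Lemma~\ref{L01} tolerates the divergence-form coefficient $b$ built from the merely locally bounded $w$. Alternatively, keep $d_l\partial_lw_j$ as a term with coefficient $a$ multiplied against $d$ itself rather than $\partial d$. If the lemma only admits first-order terms, use the divergence-form rewriting above.
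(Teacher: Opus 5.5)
Your proposal is correct and follows the paper's proof essentially step by step. It uses the same difference equation for $d=u-\mathcal R_\varphi u$ with the radial-integral potential absorbed into the pressure, and the same divergence-form coefficients $a^j_{kl}=u_k\delta_{jl}$, $b^j_{kl}=(\mathcal R_\varphi u)_j\delta_{kl}$ in Lemma~\ref{L01}, which holds because $\mathcal R_\varphi u$ is locally bounded. It then runs the same Baire category argument to obtain a dense set of times on which $d(\cdot,t)\equiv0$ on $B(R)$, and concludes by continuity. Discard the stray ``zero $b$'' and zeroth-order alternative, since Lemma~\ref{L01} admits no zeroth-order term. The divergence-form rewriting you also give is the correct one.
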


\begin{proof}
Fix $\varphi\in[0,2\pi)$, and set
\begin{align}\label{EQ303}
 U=\mathcal R_\varphi u,\qquad
 \pi_\varphi(x,t)=\pi(Q_\varphi^{-1}x,t),\qquad d=u-U.
\end{align}
Rotation covariance of~\eqref{EQ00} gives
\begin{align*}
 \partial_t U+(U\cdot\nabla)U-\Delta U+\nabla\pi_\varphi
 =\mathcal R_\varphi f,\qquad \div U=0.
\end{align*}
Subtracting this equation from~\eqref{EQ00} and using
\begin{align*}
 (u\cdot\nabla)u-(U\cdot\nabla)U
 =(u\cdot\nabla)d+(d\cdot\nabla)U,
\end{align*}
we obtain
\begin{align}\label{EQ304}
 \partial_t d-\Delta d+(u\cdot\nabla)d+(d\cdot\nabla)U
 +\nabla(\pi-\pi_\varphi)=g,
 \wherewhere
  g=f-\mathcal R_\varphi f.
\end{align}
Since proper rotations commute with $\curl$,~\eqref{EQ101} implies~$\curl g=0$. 
Next, we define
\begin{align}\label{EQ305}
 \chi(x,t)=\int_0^1 g(sx,t)\cdot x\,\dd s.
\end{align}
The identity $\partial_jg_i=\partial_ig_j$ yields
\begin{equation*}
 \partial_j\chi(x,t)
 =\int_0^1\biggl(g_j(sx,t)
       +s\sum_{i=1}^3x_i\partial_jg_i(sx,t)\biggr)\dd s\\
 =\int_0^1\frac{\dd}{\dd s}\bigl(sg_j(sx,t)\bigr)\,\dd s
 =g_j(x,t).
\end{equation*}
Thus, with $q=\pi-\pi_\varphi-\chi$, the equation~\eqref{EQ304} becomes
\begin{align}\label{EQ306}
 \partial_t d-\Delta d+(u\cdot\nabla)d+(d\cdot\nabla)U
 =-\nabla q,\qquad \div d=0.
\end{align}
Since $\div d=0$, we have
$(d\cdot\nabla)U_j=\sum_{k=1}^3\partial_k(U_jd_k)$.
Consequently,~\eqref{EQ306} has the form~\eqref{EQ301} with
\begin{align}\label{EQ307}
 a^j_{kl}=u_k\delta_{jl},\qquad
 b^j_{kl}=U_j\delta_{kl},\qquad p=-q.
\end{align}
Since $(u,\pi)$ and hence $f$ and $\chi$ are smooth in $Q$, all local
regularity assumptions of Lemma~\ref{L01}, including
$p=-q\in L^2_{\rm loc}(Q)$, hold.

We next find open space-time sets on which $d$ vanishes. By
\eqref{EQ106}, for every $t\in(-1,0)$ we have
$d(\cdot,t)=0$ on~$B(\rho(t))$. Let $I\subset(-1,0)$ be an arbitrary
nonempty open interval, and choose a nondegenerate compact interval~$J\subset I$. For $m\geq2$, set
\begin{align*}
 E_m=\{t\in J:d(x,t)=0\text{ for every }x\in B(1/m)\}.
\end{align*}
Each $E_m$ is closed in $J$ by continuity, and
$J=\bigcup_{m\geq2}E_m$ because $\rho(t)>0$. The Baire category theorem
therefore gives an $m$ and a nonempty open interval $I'\subset I$ such
that $I'\subset E_m$. In particular,
\begin{align}\label{EQ308}
 d=0\qquad\text{on }B(1/m)\times I'.
\end{align}
For $1/m<R<1$, apply Lemma~\ref{L01} on
$\Omega\times I'=B(R)\times I'$ with
$O=B(1/m)\times I'$. Its horizontal component is precisely
$C(O)=B(R)\times I'$. Hence, $d=0$ on $B(R)\times I'$, and letting
$R\uparrow1$ gives $d=0$ on~$B(1)\times I'$.
It follows that
\begin{align*}
 D=\{t\in(-1,0):d(x,t)=0\text{ for every }x\in B(1)\}
\end{align*}
meets every nonempty open time interval and is therefore dense in~$(-1,0)$.
It is also closed in $(-1,0)$ by continuity. Thus $D=(-1,0)$,
and $u=\mathcal R_\varphi u$ on~$Q$. Since $\varphi$ was arbitrary,
averaging over rotations gives $u=\mathcal Pu$ on~$Q$.
\end{proof}

\begin{proof}[Proof of Theorem~\ref{T01}]
By Proposition~\ref{P01}, we have $u=v$ on $Q$, so that
$u$ satisfies~\eqref{EQ107}. Define the averaged pressure
and force by
\begin{align}\label{EQ309}
 \bar\pi(x,t)=\frac1{2\pi}\int_0^{2\pi}
          \pi(Q_\varphi^{-1}x,t)\,\dd\varphi,
 \qquad \bar f=\mathcal Pf.
\end{align}
Since $u$ is axisymmetric, averaging~\eqref{EQ00} leaves its velocity terms unchanged, and we obtain
\begin{align*}
 \partial_tu+(u\cdot\nabla)u-\Delta u+\nabla\bar\pi=\bar f,
 \qquad \div u=0.
\end{align*}
Note that both $\bar\pi$ and $\bar f$ are axisymmetric; moreover,
\begin{equation*}
 \|\bar\pi\|_{L^{3/2}(Q)}\leq\|\pi\|_{L^{3/2}(Q)}
 ,
\end{equation*}
and
\begin{equation*}
 \sup_{-1<t<0}\|\bar f(\cdot,t)\|_{C_x^2(B(1))}
 \leq C\sup_{-1<t<0}\|f(\cdot,t)\|_{C_x^2(B(1))}<\infty
 .
\end{equation*}
The averaged equation is smooth on compact subsets of $Q$, so its
local energy equality holds. Together with~\eqref{EQ105}, this
shows that $(u,\bar\pi)$ is a suitable weak solution with force~$\bar f$.
All hypotheses of Theorem~\ref{T02} are now satisfied, and
\eqref{EQ103} follows.
\end{proof}

\par\medskip\noindent\textbf{Acknowledgments.} The authors were supported in part by the NSF grant DMS-2205493
and the Simons grant SFI-MPS-TSM-00025796.
We used ChatGPT for help with writing and some arguments.


\begin{thebibliography}{CSTY1}
\small

\bibitem[BIY]{BIY}
M.~Bellassoued, O.~Imanuvilov, and M.~Yamamoto,
\emph{Carleman estimate for the Navier-Stokes equations and an application
to a lateral Cauchy problem},
Inverse Problems \textbf{32} (2016), no.~2, 025001, 23~pp.

\bibitem[B]{B}
M.~Boulakia,
\emph{Quantification of the unique continuation property for the
nonstationary Stokes problem},
Math. Control Relat. Fields \textbf{6} (2016), no.~1, 27--52.

\bibitem[CKN]{CKN}
L.~Caffarelli, R.~Kohn, and L.~Nirenberg,
\emph{Partial regularity of suitable weak solutions of the Navier-Stokes
equations},
Comm. Pure Appl. Math. \textbf{35} (1982), no.~6, 771--831.

\bibitem[CSYT]{CSYT}
C.-C.~Chen, R.~M.~Strain, H.-T.~Yau, and T.-P.~Tsai,
\emph{Lower bound on the blow-up rate of the axisymmetric Navier-Stokes
equations},
Int. Math. Res. Not. IMRN (2008), no.~9, Art.~ID~rnn016, 31~pp.

\bibitem[CSTY]{CSTY}
C.-C.~Chen, R.~M.~Strain, T.-P.~Tsai, and H.-T.~Yau,
\emph{Lower bounds on the blow-up rate of the axisymmetric Navier-Stokes
equations~II},
Comm. Partial Differential Equations \textbf{34} (2009), no.~3, 203--232.

\bibitem[CIV1]{CIV1}
P.~Constantin, M.~Ignatova, and V.~Vicol,
\emph{Regularity of asymptotically axisymmetric solutions to the 3D
Navier-Stokes equations with analytic forcing},
\href{https://arxiv.org/abs/2609.20803v2}{arXiv:2609.20803v2} (2026).

\bibitem[CIV2]{CIV2}
P.~Constantin, M.~Ignatova, and V.~Vicol,
\emph{Regularity for axisymmetric Navier-Stokes with an Euler length},
\href{https://arxiv.org/abs/2609.20762v2}{arXiv:2609.20762v2} (2026).

\bibitem[ESS]{ESS}
L.~Escauriaza, G.~A.~Seregin, and V.~\v Sver\'ak,
\emph{$L_{3,\infty}$-solutions of the Navier-Stokes equations and
backward uniqueness},
Russian Math. Surveys \textbf{58} (2003), no.~2, 211--250.

\bibitem[F]{F}
C.~Fabre,
\emph{Uniqueness results for Stokes equations and their consequences
in linear and nonlinear control problems},
ESAIM Control Optim. Calc. Var. \textbf{1} (1996), 267--302,
doi:\href{https://doi.org/10.1051/cocv:1996109}{10.1051/cocv:1996109}.

\bibitem[FL]{FL}
C.~Fabre and G.~Lebeau,
\emph{Prolongement unique des solutions de l'\'equation de Stokes},
Comm. Partial Differential Equations \textbf{21} (1996), nos.~3--4,
573--596.

\bibitem[KNSS]{KNSS}
G.~Koch, N.~Nadirashvili, G.~A.~Seregin, and V.~\v Sver\'ak,
\emph{Liouville theorems for the Navier-Stokes equations and applications},
Acta Math. \textbf{203} (2009), no.~1, 83--105.

\bibitem[L]{L}
F.-H.~Lin,
\emph{A new proof of the Caffarelli-Kohn-Nirenberg theorem},
Comm. Pure Appl. Math. \textbf{51} (1998), no.~3, 241--257.

\bibitem[OAI]{OAI}
OpenAI,
\emph{Finite time blowup for Navier-Stokes},
manuscript, September~2026, \\
\url{https://openai.com/index/navier-stokes-solution/}.

\bibitem[Se]{Se}
G.~Seregin,
\emph{Remarks on Type~II blowups of solutions to the Navier-Stokes equations},
Commun. Pure Appl. Anal. \textbf{23} (2024), no.~10, 1389--1406.

\bibitem[SS]{SS}
G.~Seregin and V.~\v Sver\'ak,
\emph{On Type~I singularities of the local axi-symmetric solutions of
the Navier-Stokes equations},
Comm. Partial Differential Equations \textbf{34} (2009), no.~2, 171--201.

\end{thebibliography}
\end{document}